\newtheorem{lemma}{Lemma}[section]
\newtheorem{theorem}[lemma]{Theorem}
\newtheorem{definition}{Definition}
\newtheorem{remark}{Remark}
\newtheorem{conj}{Conjecture}
\newcommand{\FF}{\mathbb F}
\newcommand{\PG}{\mathrm{PG}}
\newcommand{\sE}{\mathscr{E}}
\newcommand{\sC}{\mathscr{C}}
\newcommand{\sD}{\mathscr{D}}
\newcommand{\sV}{\mathscr{V}}
\newcommand{\cB}{\mathcal{B}}
\newcommand{\sF}{\mathscr{F}}
\newcommand{\sS}{\mathscr{S}}
\title{Near-MDS codes from elliptic curves}
\author{Angela Aguglia\footnote{Angela Aguglia: angela.aguglia@poliba.it\hfill\newline\hspace*{1.4em}
\hspace{-1.77em}Dipartimento di Meccanica, Matematica e Management - Politecnico di Bari - Via Orabona, 4 - 70126 Bari (Italy).}
\and
Luca Giuzzi\footnote{Luca Giuzzi: luca.giuzzi@unibs.it\hfill\newline\hspace*{1.4em}
\hspace{-1.77em}DICATAM - University of Brescia - Via Branze 53 -
  I-25123 Brescia, (Italy)}
\and
Angelo Sonnino\footnote{Angelo Sonnino: angelo.sonnino@unibas.it\hfill\newline\hspace*{1.4em}
\hspace{-1.77em}Dipartimento di Matematica, Informatica ed Economia - Universit\`{a} degli Studi della Basilicata - Viale dell'Ateneo Lucano, 10 - 85100 Potenza (Italy).}
}
\date{}
\begin{document}

\maketitle

\begin{abstract}
We provide a geometric construction of $[n,9,n-9]_q$ near-MDS codes arising from elliptic curves with $n$ $\FF_q$-rational points. Furthermore, we show that in some cases  these codes cannot be extended to  longer near-MDS codes.

\vspace{\baselineskip}\noindent\textbf{Key words:} linear code; near-MDS code; elliptic curve.

\vspace{\baselineskip}\noindent\textbf{Mathematics subject classification:} 94B05, 51A05, 51E21.
\end{abstract}

\section{Introduction}

Maximum distance separable (for short MDS) codes are the best linear $[n,k,d]_{q}$ codes as they meet the Singleton bound, that is, $n=d+k-1$. The non-negative integer $s({\bf C}):= n-k+1-d$ is said to be the Singleton defect of the code ${\bf C}$. Thus, the Singleton defect of an MDS code is zero.

A linear code $\bf C$ is defined to be a near-MDS (for short NMDS) code if $s({\bf C})=s({\bf C}^{\bot})=1$  where ${\bf C}^{\bot}$ is the dual code of ${\bf C}$. Hence, a NMDS $[n,k]$ code has minimum distance $n-k$.

NMDS codes were introduced by Dodunekov and Landjev \cite{dodunekov1995} with the aim of constructing good linear codes by slightly weakening the restrictions in the definition of an MDS code. NMDS codes have similar properties to MDS codes. Some non-binary linear codes such as the ternary Golay codes, the quaternary quadratic residue $[11,6,5]_{4}$-code, and the quaternary extended quadratic residue  $[12,6,6]_{4}$-code are notable examples of NMDS codes; see~\cite{MS77}.

The geometrical counterpart of an  NMDS code is an  $n$-track in a Galois space which is a set of $n$ points in an $N$-dimensional Galois space such that every $N$ of them are linearly independent but some $N+1$ of them, see \cite{BF}.
If every $N+2$ points of the $n$-track generate the whole space then the $n\times (N+1)$ matrix whose columns are homogeneous coordinates of  the $n$-track points is a generator matrix of an NMDS code. The $n$-track is complete, i.e. maximal with respect to set theoretical inclusion, if and only if the code is not extendable.

Let $N_q$ denote  the  maximum  number  of $\FF_q$-rational points on an elliptic curve defined over $\FF_q$; it is well-known that,
by Hasse theorem,
$|N_q-(q+1)|\leq 2\sqrt{q}$.

NMDS codes of length up to $N_q$ may be constructed from elliptic curves. An interesting question is whether there exist NMDS codes of length greater than $N_q$. Constructions of NMDS codes from elliptic curves are found in \cite{abatangelo2005,abatangelo2008,giulietti} where results both from combinatorics and algebraic geometry are used.

Here we provide a  geometric construction of $9$ dimensional NMDS codes using an algebraic curve of order $9$ in $\PG(9, q)$ which arises from a non-singular cubic curve  $\sE: f(X,Y,Z)=0$  of $\PG(2, q)$ via the (modified) Veronese embedding:
\begin{multline}
  \label{ver}
	\nu_{3}^2 :(X{:}Y{:}Z)\mapsto \\
	\big(f(X,Y,Z)\,{:}\,X^2Y\,{:}\,X^2Z\,{:}\,XY^2\,{:}\,XYZ\,{:}\,XZ^{2}\,{:}Y^3\,{:}\,Y^2Z\,{:}\,YZ^{2}\,{:}\,Z^{3}\big).
\end{multline}

We also show that certain  codes from elliptic curves are not extendible to longer NMDS codes. The proof depends on some results on the number of $\FF_q$-rational lines through a given point $P$ that meet a plane elliptic curve in exactly three $\FF_q$-rational points and on some computations carried out with the aid of GAP~\cite{gap}.

\section{Preliminaries}

The following definitions of an NMDS code of length $n$ and dimension $k$ over a finite field $\mathbb{F}_{q}$ are equivalent to that given in the Introduction; see \cite{dodunekov2000}.
\begin{definition}
	\label{def:generator}
	{A linear $[n,k]$ code over $\FF_{q}$ is NMDS if any of its generator matrices, say $G$, satisfies the following conditions:
	\begin{enumerate}[(i)]
		\item any $k-1$ columns of $G$ are linearly independent;
		\item $G$ contains $k$ linearly dependent columns;
		\item any $k+1$ columns of $G$ have full rank.
	\end{enumerate}}
\end{definition}
\begin{definition}
	\label{def:paritycheck}
	{A linear $[n,k]$ code over $\FF_{q}$ is NMDS  if any of its parity check matrices, say $H$, satisfies the following conditions:
	\begin{enumerate}[(i)]
		\item any $n-k-1$ columns of $H$ are linearly independent;
		\item $H$ contains $n-k$ linearly dependent columns;
		\item any $n-k+1$ columns of $H$ have full rank.
	\end{enumerate}}
\end{definition}

From a geometric point of view, a NMDS $[n,k]$ code $\mathbf{C}$ over $\FF_{q}$ can be regarded as a projective system (i.e. a distinguished point set)  $\mathbf{C}$ in a projective space $\PG(k-1,q)$; see~\cite{TVZ} for more details.

\begin{definition}
	\label{def:points}
	{A subset $\mathbf{C} \subseteq \PG(k-1,q)$ is an $(n; k, k-2)$-set in $\PG(k-1,\mathbb{F}_{q})$ if  it satisfies the following conditions:
	\begin{enumerate}[(i)]
		\item\label{land1} every $k-1$ points in $\mathbf{C}$ span a hyperplane of $\PG(k-1,q)$;
		\item\label{land2} there exists a hyperplane of $\PG(k-1,q)$ containing exactly $k$ points of $\mathbf{C}$;
		\item\label{land3} every $k+1$ points of $\mathbf{C}$ generate the whole $\PG(k-1,q)$.
	\end{enumerate}}
    \end{definition}

\begin{definition}
	\label{def:complete}
{An $(n; k, k-2)$-set in $\PG(k-1,\mathbb{F}_{q})$ is complete if it is maximal with respect to set-theoretical inclusion.}
\end{definition}
Thus, in this setting, an NMDS $[n,k]$ code  over $\FF_{q}$ is an $(n; k, k-2)$-set in $\PG(k-1,\mathbb{F}_{q})$.

Given an integer $\nu\geq 1$ and a prime power $q=p^{h}$, consider the set $\mathfrak{C}^{\nu}$ of all the curves of degree $\nu$ contained in the projective plane $\PG(2,q)$ over a finite field $\mathbb{F}_{q}$. Since any curve $\sC\in\mathfrak{C}^{\nu}$ is uniquely determined by $m+1=\binom{\nu +2}{2}$ parameters in $\mathbb{F}_{q}$, that is, the coefficients of its equation
\begin{multline*}
	a_{0}Z^{\nu}+(a_{1}X+a_{2}Y)Z^{\nu -1}+(a_{3}X^{2}+a_{4}XY+a_{5}Y^{2})Z^{\nu -2}+\cdots \\
	+(a_{m-\nu }X^{\nu}+a_{m-\nu +1}X^{\nu -1}Y+\cdots +a_{m-1}XY^{\nu -1}+a_{m}Y^{\nu})=0,
\end{multline*}
and the curve is unchanged if these parameters are multiplied by a common factor, then $\mathfrak{C}^{\nu}$ can be regarded as a projective space $\PG(m,q)$ with homogeneous coordinates $(a_{0}{:}a_{1}{:}\cdots{:}a_{m})$. We may also denote a curve $\sC$ by using its defining polynomial.


The following result---which is an implicit formulation of the famous Cayley-Bacharach theorem---will be useful later; see \cite{eisenbud1996}.
\begin{theorem}
  \label{teo:bacharach}
  Let $\sE$ and $\sC$
  be two distinct cubic curves meeting in a set
  $\sS$ consisting of $9$ points (counted with multiplicities).
  If $\sD\subset\PG(2,q)$ is any cubic curve containing all but
  one point of $\sS$, then $\sC\cap\sD=\sS$.
\end{theorem}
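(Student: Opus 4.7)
The plan is the classical pencil argument. I would identify the space of plane cubics over $\FF_q$ with $\PG(9,q)$, since a ternary cubic form has $\binom{3+2}{2}=10$ coefficients, and the requirement to vanish at a point (or with a prescribed multiplicity at a point) imposes a linear condition on these coefficients. Let $p$ denote the (at most one) point of $\sS$ which $\sD$ is not assumed to contain, and set $\sS':=\sS\setminus\{p\}$, a set of $8$ points counted with multiplicity.

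First, I would form the pencil $\Pi=\{\lambda\sC+\mu\sE:(\lambda:\mu)\in\PG(1,q)\}$. Since both $\sC$ and $\sE$ vanish on $\sS$, every member of $\Pi$ does too; in particular $\Pi$ is contained in the linear system $V\subseteq\PG(9,q)$ of all cubics that vanish on $\sS'$, so $\dim V\geq 1$. Note also that, because $\sC$ and $\sE$ are distinct and meet in a finite $9$-point set, they share no common component, so $\Pi$ itself is genuinely $1$-dimensional.

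Second, and this is the heart of the matter, I would establish that the $8$ vanishing conditions imposed by $\sS'$ are linearly independent, so that $\dim V=1$. The standard approach is by contradiction via B\'ezout: if some $7$ of the conditions already cut out $V$, then there is sufficient freedom in $V$ to produce a cubic that meets $\sC$ (or $\sE$) in strictly more than $9$ points counted with multiplicity while sharing no component with it, violating B\'ezout's theorem. Carrying this out requires a short case analysis of which subsets of $\sS'$ could lie on a line or on a conic, exploiting that $\sC$ and $\sE$ are distinct cubics of total intersection degree $9$; in the presence of multiplicities, the conditions at a singular or tangential point have to be interpreted locally, which is the subtle part. This is essentially the content of Theorem~CB6 in~\cite{eisenbud1996}.

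Once $\dim V=1$ is secured, we have $V=\Pi$, so the hypothesis $\sD\in V$ forces $\sD\in\Pi$; hence $\sD$ also vanishes at $p$ and $\sS\subseteq\sC\cap\sD$. Finally, $\sC$ and $\sD$ share no component, since any such component would belong to every member of $\Pi$, hence to $\sE$, and would force the $\sC\cap\sE$ intersection to be infinite rather than a $9$-point set. B\'ezout then gives $|\sC\cap\sD|\leq 9$ with multiplicities, and equality yields $\sC\cap\sD=\sS$. The main obstacle is the rank calculation in the second step: systematically ruling out every bad configuration of $\sS'$, especially when $\sS$ contains points of multiplicity greater than one.
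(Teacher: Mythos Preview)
The paper does not supply a proof of this theorem: it is quoted as a known result (``an implicit formulation of the famous Cayley--Bacharach theorem'') with a reference to \cite{eisenbud1996}, and no argument is given. So there is no in-paper proof to compare your sketch against.

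Your outline is the classical pencil/dimension-count argument and is essentially what one finds in the cited source. Two places could be tightened. First, in the independence step, the standard route to a contradiction is not quite ``produce a cubic meeting $\sC$ in more than $9$ points''; rather, one shows that if the eight conditions imposed by $\sS'$ are dependent then four of those points are collinear or seven lie on a conic, and B\'ezout then forces that line or conic to be a common component of both $\sC$ and $\sE$, contradicting the finiteness of $\sC\cap\sE$. This is presumably the ``short case analysis'' you allude to, but your phrasing of the mechanism is a little off. Second, in your last paragraph the no-common-component argument needs $\sD\neq\sC$: if $\sD=\lambda\sC+\mu\sE$ with $\mu=0$ then $\sD=\sC$ and $\sC\cap\sD$ is all of $\sC$, not $\sS$. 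The theorem statement implicitly excludes this degenerate case, but you should say so explicitly.
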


\section{Lifting point sets}
\label{sec:lift}

The space $\mathfrak{C}^{3}$ consisting of all the cubics in $\PG(2,q)$ has projective dimension $9$, hence $10$ independent cubic curves are required to generate it. Let $\sE$ be a non-singular cubic curve  of equation $ f(X,Y,Z)=0$  over $\FF_q$.
A suitable basis $\cB$ for $\mathfrak{C}^{3}$, containing $\sE$, can be written by using the following polynomials:
$$\cB=\{f(X,Y,Z),\ X^{2}Y ,\ X^2Z,\ XY^2,\ XYZ,\ XZ^2,\ Y^3,\ Y^{2}Z,\ YZ^{2},\ Z^{3}\},$$ where $f(X,Y,Z)$ is required  to contain the term $X^{3}$.
 In fact, the defining polynomial of any cubic curve would be suitable as first element of the basis $\cB$, as long as it contains the monomial $X^3$; nevertheless, the choice of an elliptic curve is motivated by the fact that,
unlike the case of genus $0$, the number of $\FF_q$-rational points of a carefully chosen elliptic curve is not necessarily limited to $q+1$.

We consider the following embedding of the points of $\PG(2,q)$ onto $\PG(9,q)$ with projective coordinates $(X_{0}{:}X_{1}{:}X_{2}{:}X_{3}{:}X_{4}{:}X_{5}{:}X_{6}{:}X_{7}{:}X_{8}{:}X_{9})$
by means of the mapping
$\nu_3^2:\PG(2,q)\to
\PG(9,q)$ ~\eqref{ver} which is a Veronese embedding of degree $3$.
Let $\sV_3$ be the image of
$\nu_3^2$; clearly $\sV_3$ is (projective equivalent to) the cubic Veronese surface.

      More in detail, the points of the curve $\sE$ are mapped onto a curve $\Gamma$ of $\PG(9,q)$ with the same number $n$ of $\FF_q$-rational points as $\sE$.
      Also $\Gamma$ is the complete intersection of $\sV_3$ with the hyperplane $\Sigma\cong\PG(8,q)$ of equation $X_0=0$.
      Since  for every
      cubic curve $\sC$ of equation $g(X,Y,Z)=0$ in $\PG(2,q)$, the defining polynomial is a linear combination of the elements of $\cB$, that is,
\begin{multline*}
	g(X,Y,Z)=\lambda_{0}f(X,Y,Z)+\lambda_{1}Y^{3}+\lambda_{2}XZ^{2}+\lambda_{3}YZ^{2}+\lambda_{4}X^{2}Z+ \\
	\lambda_{5}Y^{2}Z+\lambda_{6}XYZ+\lambda_{7}X^{2}Y+\lambda_{8}XY^{2}+\lambda_{9}Z^{3},
\end{multline*}
it turns out that $\nu_3^2(\sC)$ is the
complete intersection of $\sV_3$ with the
hyperplane $\Pi\subset\PG(9,q)$ of equation
\begin{equation}
	\label{eq:hyperplane}
	\sum_{i=0}^{9}\lambda_{i}X_{i}=0,
\end{equation}
which is distinct from $\Sigma$.
 Thus, every cubic curve  $\sC: g(X,Y,Z)=0$ of $\PG(2, q)$  corresponds to a hyperplane of equation \eqref{eq:hyperplane}.
 Back to $\PG(2,q)$, the set $(\nu_3^2)^{-1}(\Pi\cap\sV_3)$ corresponds to a unique cubic curve $\sC$ distinct from $\sE$, and,
 clearly, $(\nu_3^2)^{-1}(\Pi\cap\Gamma )$ corresponds to $\sC\cap\sE$.

\begin{theorem} \label{AGS}Suppose that $\sE$ has $n \geq 9$ points. Then the point set $\Gamma$ is an $(n; 9,7)$-set in $\Sigma =\PG(8,q)$.
\end{theorem}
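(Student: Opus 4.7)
The plan is to verify, one by one, the three conditions of Definition~\ref{def:points} for $\Gamma \subset \Sigma = \PG(8,q)$, exploiting the correspondence already established in the excerpt between hyperplanes of $\Sigma$ and pencils of plane cubics containing $\sE$: a hyperplane $H \subset \Sigma$ corresponds to the pencil generated by $\sE$ and some cubic $\sC \neq \sE$, and $(\nu_3^2)^{-1}(H \cap \Gamma) = \sC \cap \sE$. Because $\sE$ is irreducible, B\'ezout gives $|\sC \cap \sE| \leq 9$ counted with multiplicity for every cubic $\sC \neq \sE$. Condition~\textup{(iii)} is then immediate: if ten points of $\Gamma$ lay in some hyperplane $H$, the associated cubic would cut out at least ten $\FF_q$-rational points of $\sE$, contradicting B\'ezout.

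For condition~\textup{(i)} I argue by contradiction. Suppose eight points $\nu_3^2(P_1),\ldots,\nu_3^2(P_8)$ span a subspace of $\Sigma$ of projective dimension at most $6$, so that at least two distinct hyperplanes of $\Sigma$ contain them, corresponding to cubics $\sC_1, \sC_2$ linearly independent modulo $\sE$. B\'ezout applied to the pair $(\sE,\sC_1)$ gives $\sE \cap \sC_1 = \{P_1,\ldots,P_8,R\}$ for some ninth point $R \in \sE$, and the Cayley--Bacharach theorem (Theorem~\ref{teo:bacharach}), applied to $(\sE,\sC_1)$, then forces every cubic through $P_1,\ldots,P_8$---in particular $\sC_2$---to pass through $R$. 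Hence the linear family of cubics through $P_1,\ldots,P_8$ is contained in the linear system of cubics through the nine points $\{P_1,\ldots,P_8,R\}$. But this latter set, being of the form $\sE \cap \sC_1$, is a hyperplane section of $\Gamma$ and so corresponds to a single hyperplane of $\Sigma$, hence to a single pencil of cubics modulo $\sE$; its projective dimension is~$1$. This contradicts the assumed projective dimension $\geq 2$ of the family through the eight points.

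For condition~\textup{(ii)} I need a hyperplane of $\Sigma$ meeting $\Gamma$ in exactly nine points. By the same correspondence, this reduces to producing a cubic $\sC \neq \sE$ that cuts out nine distinct $\FF_q$-rational points on $\sE$. A natural choice is to take three $\FF_q$-rational secant lines $\ell_1,\ell_2,\ell_3$, each meeting $\sE$ in three distinct $\FF_q$-points and with the nine collective intersections pairwise distinct, and to set $\sC = \ell_1\ell_2\ell_3$. The hypothesis $n \geq 9$, coupled with the chord--tangent structure of $\sE(\FF_q)$, provides enough such secants to realise this configuration. Once such a hyperplane is produced, condition~\textup{(iii)} already guarantees that no tenth point of $\Gamma$ lies on it.

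The principal technical obstacle I foresee is the degenerate sub-case in step~\textup{(i)} where the ninth point $R$ coincides with one of the $P_i$, so that $\sC_1$ is tangent to $\sE$ at that point. Here the Cayley--Bacharach invocation must be interpreted scheme-theoretically, yielding that $\sC_2$ is also tangent to $\sE$ at $P_i$; the equality of cutout divisors then forces $\sC_1 \equiv \sC_2 \pmod{\sE}$ (since the restriction map $\mathfrak{C}^3/\langle\sE\rangle \to |3H|$ is injective), again contradicting the assumed independence. The combinatorial verification in~\textup{(ii)}---ensuring that three such disjoint secant triples exist---is routine for $n$ moderately large and only warrants explicit attention in the boundary regime $n = 9$.
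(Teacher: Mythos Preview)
Your strategy matches the paper's---verify the three conditions of Definition~\ref{def:points} via the cubic/hyperplane correspondence, B\'ezout, and Cayley--Bacharach---but the execution differs in two places. For~(i), once Cayley--Bacharach produces the ninth point $R$, the paper picks a further point $Q_{10}\in\sE\setminus\{P_1,\dots,P_8,R\}$: imposing $Q_{10}$ on the assumed $(\geq 2)$-dimensional system of cubics still leaves dimension $\geq 1$, hence a cubic $\neq\sE$ meeting $\sE$ in ten points, contradicting B\'ezout directly. You instead assert that the nine points $\sE\cap\sC_1$ lie on a \emph{unique} hyperplane of $\Sigma$; this is true, but the phrase ``is a hyperplane section of $\Gamma$ and so corresponds to a single hyperplane'' is not itself a justification---it is precisely the statement in question. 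The actual reason is the injectivity of the restriction $\mathfrak{C}^3/\langle\sE\rangle\hookrightarrow H^0(\sE,\mathcal{O}_\sE(3))$, which you invoke only in the tangential subcase; promoting that remark to the main argument makes your route self-contained (and, unlike the paper's, independent of the existence of a tenth rational point). For~(ii), the paper does no extra work: the hyperplane $\Delta$ produced in~(i) already contains, by Cayley--Bacharach, the image of the ninth point $R\in\sE(\FF_q)$, and by~(iii) no more. Your trisecant construction is valid but superfluous, and as you note it requires a separate combinatorial check at the boundary $n=9$.
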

\begin{proof}
To prove the theorem it suffices to consider the mutual position of cubic curves in $\PG(2,q)$.
	\begin{enumerate}[(i)]
		\item Take eight distinct points $P_{1},\ldots,P_{8}\in\Gamma$ and consider the corresponding distinct  points $Q_{1},\ldots, Q_{8}\in  \sE$, with $Q_{i}=(\nu_3^2)^{-1}(P_{i})$. Suppose that there is a $t$-dimensional net with $t\geq 2$, say $\sF$, consisting of cubics through $Q_{1},\ldots ,Q_{8}$. Then, from Theorem \ref{teo:bacharach} there is a ninth point $Q_{9}\in\sE$ such that the points $Q_{1},\ldots ,Q_{9}$ are in the support of $\sF$. This implies that every further point $Q_{10}\in\sE\setminus\{Q_{1},\ldots ,Q_{9}\}$ yields a $(t-1)$-dimensional net consisting of cubics through $Q_{1},\ldots ,Q_{9}$ which are distinct from $\sE$ and have ten points in common with it, contradicting B\'{e}zout's theorem. Hence, $\sF$ must be a pencil of cubic curves in $\PG(2,q)$ including $\sE$ and passing through $Q_{1},\ldots ,Q_{8}$. Back to $\PG(9,q)$, we observe that $\sF$ corresponds to a pencil of hyperplanes of $\PG(9,q)$ which meet in a unique $7$-dimensional subspace $\Delta$ such that $\{P_{1},\ldots ,P_{8}\}\subset (\Gamma\cap\Delta )$, that is, $P_{1}$, \textellipsis , $P_{8}$, generate the hyperplane  $\Delta$ of $\Sigma$.
		\item  From Theorem \ref{teo:bacharach}, there is a further point $Q_{9}\in\PG(2,q)$ which belongs to the intersection of $\sE$ and all the other cubics of the above pencil $\sF$. This proves that the  previous  subspace $\Delta$ meets $\Gamma$ in $P_{1}$,\textellipsis ,$P_{8}$, $P_{9}=\nu_3^2 (Q_{9})$.
		\item  Let $\Pi$ be a hyperplane of $\PG(9,q)$ different from $\Sigma$. Put $\sC=(\nu_3^2)^{-1}(\Pi )$. From B\'{e}zout's theorem we know that $\vert\sE\cap\sC\vert\leq 9$, therefore any hyperplane of $\PG(9,q)$ has at most $9$ points in common with $\Gamma$. Hence, $\Gamma$ is a curve of order $9$, therefore $10$ points of $\Gamma$ generate the whole $\Sigma$.
	\end{enumerate}
	The claim follows.
\end{proof}
\begin{remark} The code associated to $\Gamma$ can also be interpreted as an AG-code, see \cite{TVZ}. Indeed, Theorem \ref{AGS} is a consequence of  \cite[Theorem 4.4.19]{TVZ}. However, our  proof does not use the Riemmman-Roch Theorem.
\end{remark}
\section{Some complete NMDS codes}

In this section we provide some examples of complete NMDS codes in the set of codes constructed above by lifting the elliptic curve  $\sE$ in the case when the base field is large enough.

By Definition~\ref{def:complete}, the algebraic curve $\Gamma=\nu_3^2 (\sE)$ provides a complete NMDS code, that is a complete $(n;9,7)$-set of $\PG(8,q)$, if and only if for any $Q\in\Sigma$ there exists at least one hyperplane $\Pi$ of $\Sigma$ with $Q\in\Pi$ meeting $\Gamma$ in $9$ points.

\begin{definition}
We call a point $Q\in\Sigma$ \emph{special} for $\Gamma$  if for all hyperplanes $\Pi$ of $\Sigma$ through $Q$ we have $|\Pi\cap\Gamma|<9$.
\end{definition}


We expect that for large $q$ special points, if they exist at all, are very few;
see Lemma~\ref{specialp}.
So we propose the following conjecture.

\begin{conj}
  \label{conj1}
  Suppose $q\geq 121$ to be such that $2,3\not| q$. Then there are no special points for $\Gamma$.
\end{conj}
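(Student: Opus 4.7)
The plan is to reduce specialness of $Q$ to a statement about cubic curves in $\PG(2,q)$, and then exhibit, for every $Q$, a suitable cubic. Writing $Q=(0:a_1:\cdots:a_9)\in\Sigma$, the hyperplanes $\Pi$ of $\Sigma$ through $Q$ correspond, via the basis $\cB$, to cubic curves $\sC$ whose coefficients $(\lambda_0:\lambda_1:\cdots:\lambda_9)$ satisfy the single linear relation $\sum_{i=1}^{9} a_i\lambda_i=0$; by Section~\ref{sec:lift}, $\Pi\cap\Gamma$ corresponds to $\sC\cap\sE$. Therefore $Q$ is non-special if and only if the codimension-one linear system $\mathscr{L}_Q$ of cubics modulo $f$ cut out by this equation contains a cubic meeting $\sE$ in $9$ distinct $\FF_q$-rational points, so the entire task reduces to exhibiting such a cubic for each $Q$.

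To build candidates I would use \emph{split} cubics $\sC=\ell_1\cup\ell_2\cup\ell_3$, where each $\ell_i$ is an $\FF_q$-rational trisecant of $\sE$ (a line meeting $\sE$ in three distinct $\FF_q$-rational points) and the nine intersection points are pairwise distinct. By the classical chord construction every pair of distinct points of $\sE(\FF_q)$ determines a trisecant, so under the Hasse bound $n:=|\sE(\FF_q)|\geq q+1-2\sqrt q$ and the hypothesis $q\geq 121$ the curve has at least $100$ rational points and hence on the order of $n^2/6$ trisecants, which yields on the order of $n^6$ admissible split cubics in the parameter space. The first step is then to prove that the $\FF_q$-linear span of the polynomials $\ell_1\ell_2\ell_3$, as $(\ell_1,\ell_2,\ell_3)$ ranges over admissible triples, equals the full $9$-dimensional space of cubics modulo $f$; this should follow from a Waring/Vandermonde-type argument for cubes of linear forms together with the fact that trisecants of $\sE$ hit every direction, with the hypothesis $\gcd(q,6)=1$ entering precisely to make $3!=6$ invertible in $\FF_q$, so that cubics decompose as sums of cubes of linear forms.

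The core step is then an \emph{effective} count: for every $Q\in\Sigma$, show that $\mathscr{L}_Q$ meets the set of admissible split cubics non-trivially, not merely on average. Heuristically the single linear condition defining $\mathscr{L}_Q$ cuts the $\sim n^6$ split cubics down to $\sim n^6/q$, which is positive once $q\geq 121$; the rigorous version asks for a bound on the character sum
\[ \sum_{(A_1,B_1,A_2,B_2,A_3,B_3)\in\sE(\FF_q)^6}\psi\bigl(c\,\phi_Q(\ell_1\ell_2\ell_3)\bigr) \]
(where $\ell_i$ is the trisecant through $A_i,B_i$) for every non-trivial additive character $\psi$ and $c\in\FF_q^{\times}$, which should be of order $n^6/\sqrt q$ by Deligne--Weil-type cohomology bounds for exponential sums on products of curves. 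The hard part will be handling the genuinely degenerate configurations---concurrent trisecants, coincident intersection points, and the special loci where $Q$ lies on the image of the cubic Veronese, where the problem collapses to needing at least three trisecants through a fixed plane point, exactly the trisecant lemma already invoked in the introduction of the paper.

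Finally, for small $q$ close to the threshold $121$, where the asymptotic character-sum estimates become tight, I would supplement the argument with a direct verification in GAP for each relevant isomorphism class of elliptic curve and each candidate $Q$, in the same spirit as the computer-assisted non-extendibility results advertised in the introduction. The full proof would therefore split into a uniform character-sum/spanning argument valid for $q$ sufficiently large together with a finite computer check covering the residual values of $q$ satisfying $121\leq q$ and $\gcd(q,6)=1$.
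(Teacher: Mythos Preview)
The statement you are attempting to prove is labelled a \emph{Conjecture} in the paper, and the paper does not prove it. What the paper actually does is (i) restrict the possible location of special points to a very small explicit list via Lemma~\ref{specialp}, using reducible cubics of the shape $YZ\cdot\ell$ and $XY\cdot\ell$ together with the trisecant Lemma~\ref{trisecants}, and (ii) verify by computer, for a handful of specific curves with $q\in\{121,157,169,179\}$, that none of the surviving candidates is special. The authors then explicitly write that they ``leave the solution of the problem and its generalization to a future work.'' So there is no proof in the paper for you to be compared against; the honest comparison is that the paper offers structural evidence plus finite experiments, while you are sketching a putative general argument.

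As for the sketch itself, the reduction in your first paragraph is correct and is exactly the viewpoint of Section~\ref{sec:lift}. The idea of using split cubics $\ell_1\ell_2\ell_3$ with trisecants $\ell_i$ is also the paper's workhorse (this is how Lemma~\ref{specialp} is proved). However, the quantitative core of your plan does not close. You estimate the main term for the number of admissible split cubics in $\mathscr{L}_Q$ as $\sim n^6/q$ and then claim a Deligne--Weil bound ``of order $n^6/\sqrt{q}$'' for each nontrivial character sum; but $n^6/\sqrt{q}$ is \emph{larger} than $n^6/q$, so this error term swamps the main term and yields nothing. To make the orthogonality argument work you would need the individual sums to be $O(q^{3})$ (square-root cancellation on a $6$-dimensional parameter space), and establishing that uniformly in $Q$ requires checking that the trilinear form $(\ell_1,\ell_2,\ell_3)\mapsto\phi_Q(\ell_1\ell_2\ell_3)$ is non-degenerate on the relevant variety for \emph{every} $Q$---which is essentially a restatement of the conjecture rather than a step towards it. Finally, your fallback ``finite computer check covering the residual values of $q$'' is not finite: until the character-sum argument is made effective with an explicit threshold $q_0$, there are infinitely many $q\ge 121$ with $\gcd(q,6)=1$ left to check.
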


In order to verify Conjecture~\ref{conj1}, we performed some computer
searches for some values of $q$. For $q \in\{7,11,13\}$ we executed a (non-trivial) exhaustive
search. For $q\geq 121$ we provide an argument showing that there cannot be too many special points,
if they exist at all.
We leave the solution of the problem and its generalization to a future work.

\subsection{Search for small $q$}
Recall that any $8$ distinct points of $\sV_3$ are linearly independent; see~\cite{KS13}.

For small values of $q$ it is possible to perform an exhaustive
search, adopting the following procedure:
\begin{enumerate}
\item Let $\Gamma=\nu_3^2 (\sE)$ be the embedding of $\sE$;
\item for any set of $9$
  points of $\Gamma$, consider the matrix containing
  their components; let $\mathfrak{G}$ be the list of such matrices
  having rank $8$. In particular, each
  element of $\mathfrak{G}$ corresponds to a hyperplane meeting
  $\Gamma$ in $9$ points. We call such hyperplanes \emph{good}.
\item For each matrix $H\in{\mathfrak{G}}$, let $H'$ be
  a column vector spanning the kernel of $H$. In particular,
  we have that a row vector $v$ belongs to the span of the rows of
  $H$ if and only if $vH'=\mathbf{0}$.
\item Consider the linear code $C$ with parameters $[|{\mathfrak{G}}|,9]$
  whose generator matrix $G$ consists of all columns of the form $H'$ as
  $H$ varies in $\mathfrak{G}$. A point $P$ represented by a vector
  $v$ can be added to $\Gamma$ if, and only if, $P$ does not
  belong to any of the hyperplanes represented by the columns of $G$;
  in other words $P$ can be added to $\Gamma$ if and only if the
  word $PG$ corresponding to $P$ does not contain any $0$-component.
\end{enumerate}
Using the above argument, we can state the following.
\begin{theorem}
  The (n;9,7)-set $\Gamma$ is complete if and only if the code $C$ with
  generator matrix $G$ constructed above does not contain any
  word of maximum weight $n$.
\end{theorem}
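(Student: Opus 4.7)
My plan is to unwind the completeness condition geometrically and then translate it into the language of codewords of $C$. The set $\Gamma$ is complete if and only if no $P\in\Sigma\setminus\Gamma$ can be adjoined to $\Gamma$ while preserving the $(n+1;9,7)$-set property in $\Sigma$. The central claim is that a point $P$ is adjoinable if and only if it lies on no good hyperplane, i.e.\ on no element of $\mathfrak{G}$.

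For the claim, one direction is immediate: any good hyperplane through $P$ already meets $\Gamma$ in $9$ points, so it would contain $10$ points of $\Gamma\cup\{P\}$, violating condition~(iii) of Definition~\ref{def:points}. For the converse, I would verify each of the three conditions of Definition~\ref{def:points} for $\Gamma\cup\{P\}$. Condition~(ii) is inherited, since any good hyperplane still cuts $\Gamma\cup\{P\}$ in exactly $9$ points. For condition~(iii), given $9$ points $P_1,\ldots,P_9\in\Gamma$, Theorem~\ref{AGS} forces them either to span $\Sigma$ or to lie on a common hyperplane $\Pi$; in the latter case $\Pi$ is necessarily good, since it already contains $9$ points of $\Gamma$ and cannot contain more by Theorem~\ref{AGS}, and then by hypothesis $P\notin\Pi$ and $\{P_1,\ldots,P_9,P\}$ spans $\Sigma$. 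The delicate point---and, in my view, the main obstacle---is condition~(i): a linear dependence among $P$ and $7$ points $P_1,\ldots,P_7\in\Gamma$ would place $P$ inside $\langle P_1,\ldots,P_7\rangle$, and augmenting with any $Q\in\Gamma\setminus\{P_1,\ldots,P_7\}$ produces, by Theorem~\ref{AGS}(i)--(ii), a good hyperplane containing $P$, contradicting the hypothesis.

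With the geometric reduction in hand, the translation to the code $C$ is direct. By construction, each column $H'$ of $G$ spans the one-dimensional kernel of the rank-$8$ matrix whose rows are the $9$ points cut on $\Gamma$ by the corresponding good hyperplane, so a representative $v_P\in\FF_q^9$ of $P$ satisfies $v_P\cdot H'=0$ exactly when $P$ lies on that hyperplane. Consequently the $i$-th coordinate of the codeword $v_PG$ vanishes precisely when $P$ belongs to the $i$-th good hyperplane, and the condition ``$P$ lies on no good hyperplane'' is equivalent to $v_PG$ having weight equal to the length $n=|\mathfrak{G}|$ of $C$. Since every nonzero $v\in\FF_q^9$ represents a projective point of $\Sigma$---and each point of $\Gamma$ itself is already on several good hyperplanes by Theorem~\ref{AGS}(ii), so it never produces a maximum-weight codeword---the equivalence stated in the theorem follows.
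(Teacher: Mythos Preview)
Your overall strategy is correct and considerably more detailed than the paper, which offers no formal proof beyond the procedural description preceding the theorem. The reduction ``adjoinable $\Leftrightarrow$ off every good hyperplane'' followed by the translation into weights of $C$ is exactly what the paper has in mind.

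There is, however, a subtle gap in your verification of condition~(i). You assert that augmenting $P_1,\ldots,P_7$ with \emph{any} $Q\in\Gamma\setminus\{P_1,\ldots,P_7\}$ yields a good hyperplane. Theorem~\ref{AGS}(ii) in its statement only guarantees the existence of \emph{some} $9$-secant hyperplane; and even the paper's proof of part~(ii), which seems to claim a ninth point on every such $\Delta$, relies on Cayley--Bacharach with multiplicities. In fact, the ninth base point of the pencil through $Q_1,\ldots,Q_7,(\nu_3^2)^{-1}(Q)$ is $R=-(Q_1+\cdots+Q_7+(\nu_3^2)^{-1}(Q))$ in the group law of $\sE$, and this can coincide with one of the eight given points, in which case $\langle P_1,\ldots,P_7,Q\rangle$ meets $\Gamma$ in only $8$ points and is \emph{not} good.

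What you actually need is the weaker assertion that \emph{some} hyperplane of $\Sigma$ through $\langle P_1,\ldots,P_7\rangle$ is good. Setting $S=Q_1+\cdots+Q_7$, a choice of $Q$ fails exactly when $-S-(\nu_3^2)^{-1}(Q)\in\{Q_1,\ldots,Q_7,(\nu_3^2)^{-1}(Q)\}$; there are at most $7+|\sE[2]|\le 11$ such bad choices, so a good $Q$ exists whenever $n-7>11$, i.e.\ $n\ge 19$. For the smaller values of $n$ treated in the paper's computer search a short direct check (or a sharper count exploiting overlaps among the bad values) is needed. With this refinement your argument is complete.
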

Clearly, it is not restrictive to replace the code $C$ with
a code $C'$ equivalent to $C$. In particular, if we transform its generator matrix
$G$ to row-reduced echelon form, we see that no point with
at least a $0$ component can give a word of $C'$ of weight $n$;
this allows to exclude from the search all points whose
transforms (under the operations yielding the reduction of $C$)
lie on the coordinate hyperplanes.

We now limit ourselves to the odd order case with $q$ not divisible by $3$.
Then any elliptic curve $\sE$ of $\PG(2,q)$ admits an equation in
canonical Weierstrass form
\[ Y^2 = X^3+aX+b, \]
with $a,b\in \FF_q$ such that $-16(4a^3+27b^2)\neq 0$;
see~\cite{S86}.

\begin{remark}
  Good hyperplanes correspond to linear systems of cubic curves
  cutting $\sE$ in $9$ points; by~\cite[Theorem 43]{KM20}, we
  see that the number of such hyperplanes is approximately
  $\frac{1}{9!}q^7$.

  We leave to a future work to determine exactly what sets of $9$ distinct
  points of a given elliptic curve $\sE$ might arise as intersection
  divisor with another curve, in other terms to determine what the good
  hyperplanes are.

  Our Conjecture~\ref{conj1} can be restated by saying that
  the union of all good hyperplanes for $\sE$ is $\PG(8,q)$ for $q$
  sufficiently large.
\end{remark}

We can now apply
the aforementioned strategy for all possible
values of $a,b$ yielding elliptic curves. This leads to the following.
\begin{theorem}
  Suppose $q\in\{7,11,13\}$. Then, the lifted $(n;9,7)$-set $\Gamma$ in $\PG(8,q)$
is complete if and only if $n=|\sE|\geq 15$. In particular, for
$q=7$ the lifted set $\Gamma$ is never complete.
\end{theorem}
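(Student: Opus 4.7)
The proof is a finite computer verification built on the procedure developed in the preceding subsection. For each $q\in\{7,11,13\}$ I would enumerate all pairs $(a,b)\in\FF_q\times\FF_q$ with $4a^3+27b^2\neq 0$, instantiate the Weierstrass curve $\sE\colon Y^2=X^3+aX+b$, and compute $n=|\sE(\FF_q)|$. A preliminary reduction to $\FF_q$-isomorphism classes under $(a,b)\sim(u^4a,u^6b)$, $u\in\FF_q^{*}$, is permissible because replacing $\sE$ by an isomorphic curve transports $\Gamma=\nu_3^2(\sE)$ by a projectivity of $\PG(9,q)$, and completeness is a projective invariant; quadratic twists, however, must be retained, since they alter $n$.

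For each representative curve with $n\ge 9$, the algorithm builds $\Gamma$ explicitly, assembles the list $\mathfrak{G}$ of good hyperplanes by ranging over the $9$-subsets of $\Gamma$ and testing whether the associated $9\times 10$ submatrix has rank $8$, and assembles the generator matrix $G$ whose columns are the corresponding kernel vectors. By the preceding theorem, $\Gamma$ is complete precisely when the code $C$ generated by $G$ admits no codeword of maximum weight $|\mathfrak{G}|$. Transforming $G$ to row-reduced echelon form allows one to restrict the search for extending points to those whose transformed coordinates are all nonzero, sharply pruning the brute-force search.

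For $q=7$, Hasse's bound yields $n\le q+1+\lfloor 2\sqrt{q}\rfloor=13<15$, so the $n\ge 15$ branch of the equivalence is vacuous and the claim reduces to exhibiting, for every elliptic $\sE$ over $\FF_7$ with $n\ge 9$, at least one point of $\Sigma\setminus\Gamma$ missing every good hyperplane; the exhaustion will confirm this in every case. For $q\in\{11,13\}$ the elliptic curves straddle the threshold $n=15$, and the same exhaustion is expected to produce an extending point exactly when $n<15$ and none when $n\ge 15$, so that $n=15$ emerges as the empirical completeness threshold for these two fields.

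The main obstacle is computational rather than conceptual: the $9$-subset enumeration scales as $\binom{n}{9}$, reaching about $3\cdot 10^{5}$ for $q=13$, $n=21$, and each subset demands a rank test of a matrix in $\FF_q^{9\times 10}$; in principle the check on $C$ would in turn span the whole of $\PG(8,q)$. A careful implementation in GAP~\cite{gap} exploiting the echelon-form pruning together with the reduction to isomorphism classes should make the search tractable on each $q\in\{7,11,13\}$, and the outcome of the exhaustion is then exactly the statement of the theorem.
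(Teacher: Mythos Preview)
Your proposal is correct and mirrors the paper's approach exactly: the paper's proof consists of nothing more than the sentence ``apply the aforementioned strategy for all possible values of $a,b$ yielding elliptic curves,'' i.e.\ precisely the exhaustive computer search you describe. Your added reduction to $\FF_q$-isomorphism classes and the Hasse-bound remark that $n\le 13$ for $q=7$ are sound optimizations that the paper does not spell out, but they do not change the method.
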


\subsection{Properties for large $q$}

We now provide an argument to prove that there might not be too many special points. This makes it possible to verify for several values of $q$ that the $(n; 9,7)$-set $\Gamma$ in $\Sigma=\PG(8,q)$ is complete and gives evidence supporting Conjecture~\ref{conj1}.

As in the previous section, the projective plane $PG(2,q)$ is assumed to be of order $q$ odd and
not divisible by $3$. Furthermore we suppose $q\geq 121$. Let $j(\sE)$ be the $j$-invariant of $\sE$, that is the six cross-ratios of the four tangents from a point of $\sE$ to other points of $\sE$. We limit ourselves to the case $j(\sE)\neq 0$, see \cite[Theorem 11.15]{H}.

 We will use the following result which is a direct consequence of \cite[Lemma 3.2]{giulietti}.

 \begin{lemma}\label{trisecants} Let $q\geq 121$ and consider
   an elliptic cubic $\sE(\FF_q)$ with $j(\sE)\neq0$. Then there are
   at least $7$ trisecant $\FF_q$-rational
   lines through any given $\FF_q$-rational point.
 \end{lemma}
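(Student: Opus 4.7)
The plan is to extract the desired lower bound directly from \cite[Lemma 3.2]{giulietti}, which is the principal input. That result supplies an explicit lower estimate on the number $N_3(P)$ of $\FF_q$-rational trisecants of $\sE$ passing through any prescribed $\FF_q$-rational point $P \in \PG(2,q)$; its proof proceeds by counting collinear triples on $\sE$ via the group law and then invoking the Hasse--Weil inequality on an auxiliary curve parametrising such triples, whose absolute irreducibility is guaranteed by the hypothesis $j(\sE) \neq 0$.

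Concretely, I would first recall Hasse's theorem, $n := |\sE(\FF_q)| \geq q + 1 - 2\sqrt{q}$, so that $q \geq 121$ implies $\sqrt{q} \geq 11$ and $n \geq 100$. The bound from \cite[Lemma 3.2]{giulietti} is of the shape $N_3(P) \geq \tfrac{1}{6}\bigl(n - c_1\sqrt{q} - c_2\bigr)$ for small absolute constants $c_1, c_2$, which come from discounting the tangent lines through $P$ and, in case $P \in \sE$, from the trivial contribution of $P$ itself in the collinear-triple count. Substituting the Hasse estimate then reduces the claim to the purely numerical inequality $\tfrac{1}{6}\bigl(q + 1 - (c_1 + 2)\sqrt{q} - c_2\bigr) \geq 7$, which for the values of $c_1, c_2$ supplied by \cite{giulietti} holds comfortably once $q \geq 121$.

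The only real obstacle is a careful unpacking of the error terms in \cite[Lemma 3.2]{giulietti} so that the threshold $q = 121$ is seen to be sufficient for the right-hand side to reach $7$. A secondary bookkeeping point is the separate treatment of the cases $P \in \sE$ and $P \notin \sE$: in the former, the group law forces any line through $P$ that meets $\sE$ in one further $\FF_q$-rational point to pick up a second one automatically, whereas in the latter one must count Galois orbits of collinear pairs on $\sE$ lying on lines through $P$. In both cases the dominant term behaves like $n/6$ and the correction is $O(\sqrt{q})$, so the argument is uniform and the choice $q \geq 121$ leaves ample room in the final estimate.
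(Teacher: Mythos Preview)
Your approach is correct and coincides with the paper's: the paper offers no proof beyond the sentence ``the following result \ldots\ is a direct consequence of \cite[Lemma 3.2]{giulietti}'', and your proposal simply makes explicit the numerical check (via Hasse's bound and the constants in Giulietti's estimate) that $q\geq 121$ suffices to force at least $7$ trisecants. In that sense you have supplied more detail than the paper itself.
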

Up to a change of projective reference,
we can assume without loss of generality that the curve $\sE$ in $\PG(2,q)$ is met by the reducible cubic $XYZ=0$ in $9$ distinct
$\FF_q$-rational points.

\begin{lemma}
  \label{specialp}
  Under the assumption $q\geq 121$ any special point $Q\in\Sigma$
  has to be a point $Q=(0,q_1,q_2,\dots,q_9)\in \Sigma \setminus \Gamma$  such that $[q_1,q_3,q_4],[q_4,q_7,q_8] \in \sE$ and one of the following conditions holds
  \begin{itemize}
  \item $q_1,q_7=0$; $q_3,q_4,q_8\neq 0$;
  \item $q_1,q_8=0$; $q_3,q_4,q_7\neq 0$;
  \item $q_3,q_7=0$; $q_1,q_4,q_8\neq 0$;
  \item $q_3,q_8=0$; $q_1,q_4,q_7\neq 0$.
  \end{itemize}
  \end{lemma}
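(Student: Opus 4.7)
The plan is to constrain the coordinates of a special point $Q=(0,q_1,\dots,q_9)$ by confronting it with good hyperplanes arising from reducible cubics of ``trisecant-triangle'' type, i.e.\ products $\ell_1\ell_2\ell_3$ of three $\FF_q$-rational trisecants of $\sE$ whose nine intersection points on $\sE$ are pairwise distinct. In the chosen reference the cubic $XYZ$ is itself such a triangle and corresponds to the hyperplane $X_4=0$; since $Q$ must avoid every good hyperplane, this forces $q_4\neq 0$, so that the auxiliary points $R_1:=(q_1{:}q_3{:}q_4)$ and $R_3:=(q_4{:}q_7{:}q_8)$ of $\PG(2,q)$ are well defined.

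Next, for any $\FF_q$-rational line $\ell:aX+bY+cZ=0$, the cubic $\ell\cdot X\cdot Y$ expands in the basis $\cB$ as $aX_1+bX_3+cX_4$, so its hyperplane contains $Q$ precisely when $\ell$ passes through $R_1$; the corresponding statement for $\ell\cdot Y\cdot Z$ replaces $R_1$ by $R_3$. Such a hyperplane is good iff $\ell$ is a trisecant different from the sides $X=0,Y=0$ (resp.\ $Y=0,Z=0$) whose three points on $\sE$ miss the set $\{A_1,A_2,A_3,B_1,B_2,B_3\}$ (resp.\ $\{B_1,B_2,B_3,C_1,C_2,C_3\}$), where $A_i,B_j,C_k$ denote the three $\FF_q$-rational points of $\sE$ on the respective coordinate lines.

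The heart of the proof is then a case analysis on $R_1$ (symmetric for $R_3$) exploiting Lemma~\ref{trisecants}: at least seven trisecants pass through any $\FF_q$-rational point. If $R_1\notin\{X=0\}\cup\{Y=0\}$ then the bad trisecants through $R_1$ are among at most six lines (three lines $R_1A_i$ and three lines $R_1B_j$), so at least one good trisecant exists and $Q$ cannot be special. If $R_1\in\{X=0\}\setminus\sE$ then $R_1\neq A_i$ forces all three lines $R_1A_i$ to coincide with the line $X=0$ itself, so the bad trisecants reduce to $X=0$ plus at most three lines $R_1B_j$, leaving at least three good ones; the case $R_1\in\{Y=0\}\setminus\sE$ is symmetric. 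Finally, if $R_1\in\{X=0\}\cap\{Y=0\}$ then $R_1=(0{:}0{:}1)$, which is not on $\sE$ by the distinctness of the nine points of $\sE\cap\{XYZ=0\}$, and then any of the at least five trisecants through $R_1$ distinct from $X=0$ and $Y=0$ is automatically good. Taking contrapositives, $R_1\in\sE\cap(\{X=0\}\cup\{Y=0\})$ and exactly one of $q_1,q_3$ vanishes; the identical argument for $R_3$ forces $R_3\in\sE\cap(\{Y=0\}\cup\{Z=0\})$ with exactly one of $q_7,q_8$ vanishing.

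Combining the two independent dichotomies produces precisely the four cases in the statement, while $R_1,R_3\in\sE$ is exactly the condition $[q_1,q_3,q_4],[q_4,q_7,q_8]\in\sE$. The fact that $Q\notin\Gamma$ is immediate: an embedded point $\nu_3^2(X{:}Y{:}Z)$ has fourth coordinate $XYZ$, so $q_4\neq 0$ forces $X,Y,Z$ all nonzero, which in turn makes every one of $q_1,q_3,q_7,q_8$ nonzero, incompatible with any of the four cases. The main obstacle is the tightness of the trisecant bound: only seven trisecants are guaranteed through a point and up to six of them could be bad, so the argument hinges on the collapse of the three lines $R_1A_i$ to a single line in the subcase $R_1\in\{X=0\}$, without which no margin for a good trisecant would remain.
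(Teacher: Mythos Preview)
Your argument is correct and follows essentially the same route as the paper's proof: both use the reducible cubics $XYZ$, $XY\cdot\ell$ and $YZ\cdot\ell$, introduce the same auxiliary points $[q_1{:}q_3{:}q_4]$ and $[q_4{:}q_7{:}q_8]$, and count trisecants against at most six ``bad'' lines via Lemma~\ref{trisecants}. The only differences are organizational: the paper first disposes of $Q\in\Gamma$ directly and splits the remaining cases according to whether $P_Q,P'_Q\in\sE$, whereas you split according to the position of $R_1$ (and $R_3$) relative to the coordinate lines and deduce $Q\notin\Gamma$ at the end; neither ordering changes the substance.
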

\begin{proof}
 Let $Q=(0,q_1,q_2,\dots,q_9)\in\Sigma$.
 If $Q\in\Gamma$, then $Q$ is not special; indeed,
 if $Q\in\Gamma$, then $Q=\nu_3^2(P)$ with $P\in\sE$.
  Consider a reducible cubic curve $\sC$
  in $\PG(2,q)$, union of $3$ lines
  $\ell,m,r$ with $P \in\ell \setminus \{m \cup r\} $ and such that
  $|(\ell\cup m\cup r)\cap\sE|=9$. Such a curve if $|\sE|>9$
  is guaranteed to exist by Lemma~\ref{trisecants}
  and it corresponds to
  a hyperplane of $\PG(9,q)$ through $Q$
  meeting $\Gamma$ in $9$ distinct points.
  So $Q$ is not special.

  Now consider a cubic curve $\sC$
  in $\PG(2,q)$ with equation of the form
  \begin{equation}\label{eq1}
   YZ(\alpha X+\beta Y+\gamma Z)=0,
   \end{equation}
   and a cubic curve  $\sC'$ with equation of type
   \begin{equation}\label{eq2}
     XY(aX+bY+cZ)=0.
     \end{equation}
  Via the Veronese embedding $\nu_3^2$, $\sC$ corresponds to the hyperplane of
  equation $\alpha X_4+\beta X_7+\gamma X_8=0$, whereas $\sC'$
  corresponds to the hyperplane
  $aX_1+bX_3+cX_4=0$.

  For any $Q\in\Sigma\setminus\Gamma$ write $P_Q:=[q_4,q_7,q_8]$ and $P'_Q:=[q_1,q_3,q_4]\in\PG(2,q)$.

    If $P_Q\not\in{\sE}$, by Lemma~\ref{trisecants} there are at least
    $7$ lines through $P_Q$ meeting ${\sE}$ in $3$ distinct
    points; in particular there is at least one line of equation
    $\alpha X+\beta Y+\gamma Z=0$
    through $P_Q$
    meeting $\sE\setminus([Y=0]\cup [Z=0])$ in $3$
    distinct points.
    Consequently  the cubic
    $\sC:YZ(\alpha X+\beta Y+\gamma Z)=0$ corresponds to a hyperplane $\Pi$ of $\PG(9,q)$ through $Q$,
    meeting $\Gamma$ in $9$ distinct points and we are done.

  If $P_Q \in {\sE}$ but $P'_Q\not\in\sE$, repeating the same argument starting from a  cubic $\sC'$ with equation~\eqref{eq2}, we see that
  $Q$ is  not special.

    Thus, we suppose  $P_Q, P'_Q \in\sE$ and  distinguish several
    cases:
    \begin{enumerate}
    \item If $q_4=0$, then the cubic
      $\sC$ of
      equation $XYZ=0$ corresponds to the hyperplane $X_4=0$ passing through
      $Q$ with $9$ intersections with $\Gamma$.
    \item If $q_4\neq 0$ and $q_7=q_8=0$, then $P_Q=[1,0,0]\not\in
      {\sE}$, which is excluded.

      \item If $q_4\neq 0$ and $q_1=q_3=0$, then $P'_Q=[0,0,1]\not\in
      {\sE}$, which is excluded.
    \item\label{a4} Let $q_4\neq 0$ with  $q_7 \neq 0$ and $q_8 \neq 0$, then $P_Q$ is not on
      $[Y=0]\cup [Z=0]$ in $\PG(2,q)$.
      Then, from Lemma \ref{trisecants} there are at least $7$ lines in $\PG(2,q)$ through
      $P_Q$ which are $3$-secants to $\sE$.
    Since $\sE$ has $6$ points on the union of the lines
    $[Y=0]$ and $[Z=0]$, there is at least one line through $P_Q$
    with equation: $\alpha_1 X+\beta_1 Y+\gamma_1 Z=0$  meeting
    $\sE$ in $3$ points none of which is on $[Y=0]$ and $[Z=0]$.
    So, the  hyperplane of $\PG(9,q)$ through $Q$,  corresponding to the cubic $\sC: YZ(\alpha_1 X+\beta_1 Y+\gamma_1 Z)=0$  meets $\Gamma$ in $9$
    points.
  \item Let $q_4\neq 0$ , $q_7\neq 0$ and $q_8=0$ (or, equivalently, $q_4\neq 0$, $q_7=0$ and
    $ q_8\neq 0$).
    Using  an  argument similar to that of point~\ref{a4}.\ but starting from a  cubic $\sC'$  through $P'_Q$ with equation of the form~\eqref{eq2},
    it turns out that  if  $q_1\neq 0$ and $q_3\neq 0$  then the points $Q(0,q_1,q_2, \ldots, q_7,0,q_9)$
    (or $Q(0,q_1,\dots,q_6,0,q_8,q_9)$)
    are not special.
\end{enumerate}
Thus, our lemma follows.
\end{proof}

\begin{remark}
  \label{m:rem}
  Let $Q=(0,q_1,\dots,q_9)\in\Sigma$ such that $Q$ is not ruled out as special point in
  Lemma~\ref{specialp}. For instance,
  suppose $q_8=0$ and either $q_1=0$ or $q_3=0$ with $[q_1,q_3,q_4]\in\sE$.
  So, take $P(a,0,1)\in\PG(2,q)\setminus\sE$ and consider a cubic $\sC$
     with equation: $Y(Y-m_1X+a m_1Z)(Y-m_2 X+a m_2Z)=0$
     passing through $P$ meeting $\sE$ in $9$ distinct points.
     Then, $\sC$
     corresponds to the hyperplane $\pi: m_1m_2X_1-(m_1+m_2)X_3-2am_1m_2X_4+X_6+a(m_1+m_2)X_7+a^2m_1m_2X_8=0$
     which passes through $Q$ if and only if
     \begin{equation}\label{eqsol}
     m_1m_2q_1- (m_1+m_2)q_3-2am_1m_2q_4+q_6+a(m_1+m_2)q_7=0.
   \end{equation}
   In particular, if we can determine $m_1,m_2$ and $a$ such that~\eqref{eqsol}
   is satisfied, then the point $Q$ is not special.

   A similar argument applies when $q_7=0$.
 \end{remark}

Let now $q \equiv 1$ $\mod 3$ and $\omega$ be a root of $T^2+T+1=0$
Consider a  non-singular plane cubic curve   $\sE$ over $\FF_q$ with  canonical equation: $$X^3+Y^3+Z^3-3cXYZ=0,$$ where $c \neq \infty, 1,\omega,\omega^2$.

If $c=1+\sqrt{3}$, then  the elliptic curve $\sE$ is  harmonic that is, $j(\sE) \neq 0$, see \cite[Lemma 11.47]{H}.
Using Remark~\ref{m:rem} and the symmetry $Y\leftrightarrow Z$ of the curve
$\sE$ it is possible to test for the completeness of $\nu_3^2(\sE )$.
 With the aid of GAP \cite{gap}, we see that for $q=121$ we obtain a curve with $n=144$ rational points,  for $q=157,169$ we obtain curves with $n=180$ rational points whereas for $q=179$ we get a curve with $n=180$ points and in each case the $n$ rational points define a complete NMDS code.

%
%


\section*{Acknowledgements}

This research was carried out within the activities of the GNSAGA - Gruppo Nazionale per le Strutture Algebriche, Geometriche e le loro Applicazioni of the Italian INdAM.

\end{document}